\newtheorem{definition}{Definition}
\newtheorem{proposition}{Proposition}
\newtheorem{remark}{Remark}
\newtheorem{lemma}{Lemma}
\begin{document}

\title{\textbf{Characterizing the Functional Density Power Divergence Class}}
\author{Ray, S.$^{1}$, Pal, S.$^{2}$, Kar, S.~K.$^{3}$ and Basu, A.$^{4}$\\$^{1}${\small Stanford University},$^{2}${\small Iowa State University}\\$^{3}${\small University of North Carolina, Chapel Hill}, $^{4}${\small Indian Statistical Institute}}
\date{}
\maketitle

\begin{abstract}
Divergence measures have a long association with statistical inference, machine learning and information theory. The density power divergence 
and related measures have produced many useful 
(and popular) statistical procedures, which provide a good balance between model efficiency on one hand and outlier stability or robustness on the other. The logarithmic density power divergence, 
a particular logarithmic transform of the density power divergence, has also been very successful in producing efficient and stable inference procedures; in addition it has also led to significant demonstrated applications in information theory. The success of the minimum divergence procedures based on the density power divergence and the logarithmic density power divergence (which also go by the names $\beta$-divergence and $\gamma$-divergence, respectively) make it imperative and meaningful to look for other, similar divergences which may be obtained as transforms of the density power divergence in the same spirit. With this motivation we search for such transforms of the density power divergence, referred to herein as the functional density power divergence class. The present article characterizes this functional density power divergence class, and thus identifies the available divergence measures within this construct that may be explored further for possible applications in statistical inference, machine learning and information theory. 
\end{abstract}

\section{Introduction}

Divergence measures have natural and appealing applications in many scientific disciplines including statistics, machine learning and information theory. The method based on likelihood, the canonical approach to inference in statistical data analysis, is itself a minimum divergence method; the
maximum likelihood estimator minimizes the likelihood disparity~\cite{L01}, a version of the Kullback-Leibler divergence. Among the 
different formats of minimum divergence inference, the approach based on the minimization of density-based divergences is of particular 
importance, as in this case the resulting procedures combine a high degree of model efficiency with strong robustness properties. 

The central elements in the present research are the collections of the density-based minimum divergence procedures based on the density power divergence (DPD)
of~\cite{B01}. 
The popularity
and utility of this procedure make it important to study other similar divergences in search of competitive or better statistical (and other)
properties. Indeed, one such divergence that is known to us and one that has also left its unmistakable mark on the area of robust statistical inference
is the logarithmic density power divergence (LDPD); see, e.g.,~\cite{J02},~\cite{F01},~\cite{B02},~\cite{F02}. The applicability of this class of divergences in mathematical information theory has been explored in~\cite{K02},~\cite{K03},~\cite{GB01},~\cite{GB02}.

Both the ordinary DPD and the LDPD belong to the functional density power divergence class that we will define in the next section. 
These two families of divergences have also been referred to as the BHHJ and the JHHB classes, or the type 1 and type 0 classes, or 
the $\beta$-divergence and the $\gamma$-divergence classes; more details about their applications may be found in ~\cite{J02},~\cite{F01},~\cite{C01},~\cite{B02} 
among others. However, while the DPD belongs to the class of Bregman divergences, the LDPD does not. The DPD is also a single-integral, non-kernel divergence \cite{J01}; the LDPD is not a {single-integral} divergence, although it is a non-kernel one.  
The non-kernel divergences 
have also been called \textit{decomposable} in the literature~\cite{B03}. The divergences within the DPD family have been shown to possess 
strong robustness properties in statistical applications. 
The LDPD family is also useful in this respect.

Our basic aim
in this work is to characterize the class of functional density power divergences. Essentially, each functional density power divergence corresponds
to a function with the non-negative real line as its domain. The DPD corresponds to the identity function, while the LDPD corresponds to the log function. 
Within the class of functional density power divergences, we will characterize the class of functions which generate legitimate divergences. In turn, this will provide a characterization of the functional density power divergence class.

\section{The DPD and the LDPD}{\label{dpddef}}
Suppose $(\mathbb{R},\mathcal{B}_{\mathbb{R}}, \mu)$ is a measure space on the real line. Introduce the notation
\begin{equation}
	\label{defn}
	\mathcal{L}_{\mu} := \left\{ f : \mathbb{R} \longrightarrow \mathbb{R} : \; f \geq 0, \;f \;\text{measurable}, \; \int_{\mathbb{R}} f \;d\mu = 1 \right\}.
\end{equation}
A divergence defined on $\mathcal{L} \subseteq \mathcal{L}_{\mu}$ is a non-negative function $D : \mathcal{L} \times \mathcal{L} \longrightarrow [0,\infty] $ with the property 
\begin{equation}
	\label{prop}
	D(g,f) =0 \iff g=f \; a.e.[\mu].
\end{equation}
For the sake of brevity we will drop the dominating measure $\mu$ from the notation; it will be understood that all the integrations and almost sure statements are with respect to this dominating measure. We also suppress the dummy variable of integration from the expression of the integrals in the rest of the paper.

One of the most popular examples of such families of divergences is the {\it density power divergence} (DPD) of~\cite{B01} defined as 
\begin{equation}
	\label{DPD}
	\text{DPD}_{\alpha}(g,f) :=  \int_{\mathbb{R}} f^{\alpha+1} - \Big( 1+\dfrac{1}{\alpha} \Big) \int_{\mathbb{R}} f^{\alpha}g + \dfrac{1}{\alpha}  \int_{\mathbb{R}} g^{\alpha+1},
\end{equation}  
for all $g,f \in \mathcal{L}_{\mu, \alpha}$, where $\alpha$ is a non-negative real number and 
$$  \mathcal{L}_{\mu, \alpha} := \left\{ f \in \mathcal{L}_{\mu} \; : \; \int_{\mathbb{R}} f^{1+\alpha} \;d\mu < \infty \right\}.$$
For $\alpha=0$, the definition is to be understood in a limiting sense as $\alpha \downarrow 0$, and the form of the divergence then turns out to be,
\begin{equation}
	\label{DPD2}
	\text{DPD}_{\alpha=0}(g,f) :=  \int_{\mathbb{R}} g \log \dfrac{g}{f}, \;\; {\rm for~all} \; \; g,f \in \mathcal{L}_{\mu} = \mathcal{L}_{\mu,0},
\end{equation}  
which is actually the likelihood disparity, see~\cite{L01}; it is also a version of the Kullback-Leiber divergence. For $\alpha = 1$, the divergence in~(\ref{DPD}) reduces to the squared $L_2$ distance. It is easy and straightforward to check that the definition in~(\ref{DPD}) satisfies the condition in~(\ref{prop}).

Another common example of a related divergence class is the {\it logarithmic density power divergence} (LDPD) family of~\cite{J02} defined as  
\begin{align}
	\label{defnldpd}
	\text{LDPD}_{\alpha} (g,f) &:= \log \Big(\int_{\mathbb{R}} f^{\alpha+1} \Big) - \Big( 1+\dfrac{1}{\alpha} \Big) \log \Big(\int_{\mathbb{R}} f^{\alpha}g \Big) + \dfrac{1}{\alpha} \log \Big( \int_{\mathbb{R}} g^{\alpha+1} \Big),
\end{align} 
for all $g, f \in \mathcal{L}_{\mu, \alpha}, \alpha \geq 0.$ Its structural similarity with the DPD family is immediately apparent. It is obtained by replacing the identity function on each component of the integral by the log function. This family is also known to produce highly robust estimators with good efficiency. ~\cite{F01} and ~\cite{F02}, in fact, argue that the minimum divergence estimators based on the LDPD are more successful in minimizing the bias of the estimator under heavy contamination in comparison to the minimum DPD estimators. However, also see~\cite{K01} for some counter views. The latter work has, in fact, proposed a new class of divergences which provides a smooth bridge between the DPD and the LDPD families.

\section{The Functional Density Power Divergence}

Further exploration of the divergences within the DPD family leads to the observation that this class of divergences may be extended to a more general family of divergences, called the \textit{functional density power divergence family} having the form 
\begin{align}
	\label{defn2}
	\text{FDPD}_{\varphi,\alpha}(g,f) &:= \varphi \Big(\int_{\mathbb{R}} f^{\alpha+1} \Big) - \Big( 1+\dfrac{1}{\alpha} \Big) \varphi \Big(\int_{\mathbb{R}} f^{\alpha}g \Big)  + \dfrac{1}{\alpha} \varphi \Big( \int_{\mathbb{R}} g^{\alpha+1} \Big), 
\end{align} 
for all $g,f \in \mathcal{L}_{\mu,\alpha}$, where $\varphi : [0, \infty) \longrightarrow [-\infty, \infty]$ 
is a pre-assigned function, $\alpha$ is a non-negative real number and $\text{FDPD}_{\varphi, \alpha}$ is a divergence in the sense of (\ref{prop}). Note that the expression given in~(\ref{defn2}) need not necessarily define a divergence for all $\varphi$ as it does not always satisfy the condition stated 
in Section~\ref{dpddef}. Indeed, it may even not be well-defined for all pairs of densities $f,g \in \mathcal{L}_{\mu, \alpha}$ since $\varphi$ may take the value $\infty$. In the following we will identify the class of functions $\varphi$ for which the quantity defined in (\ref{defn2}) is actually a divergence, thus providing a characterization of the FDPD class.  

Within the FDPD class, the case $\alpha=0$ has to be again understood in a limiting sense and this limiting divergence exists under some constraints on the function $\varphi$. For example, if we assume $\varphi$ is continuously differentiable in an interval around $1$, then the divergence for $\alpha=0$ can be defined as
\begin{equation}
	\label{zero case}
	\text{FDPD}_{\varphi,\alpha=0} (g,f) := \varphi^{\prime}(1) \int_{\mathbb{R}} g \log \dfrac{g}{f}, \;\; {\rm for~all} \; \; g,f \in \mathcal{L}_{\mu},
\end{equation}
where $\varphi^{\prime}$ is the derivative of $\varphi$. Obviously we require $\varphi^{\prime}(1)$ to be positive for the above to be a divergence. Note that, the divergence in~(\ref{zero case}) is actually the likelihood disparity with a different scaling constant and therefore the divergences $\text{FDPD}_{\varphi, \alpha=0}$ are effectively equivalent to the likelihood disparity for inferential purposes. For the DPD and the LDPD, in fact, the scaling constant $\varphi^{\prime}(1)$ equals unity. The characterization of the FDPD, therefore, is not an interesting problem for $\alpha = 0$; 
hence, we will not concern ourselves with the $\alpha=0$ case in the following.

\begin{remark}
	Suppose, $\varphi$ is a strictly increasing and convex function on the non-negative real line. Then it is straightforward to check that the expression defined in~(\ref{defn2}) does indeed satisfy the divergence conditions in Section~\ref{dpddef} and therefore defines a legitimate divergence which belongs to the FDPD class. Note that $\varphi^{\prime}(1)$ is necessarily positive in this case. The identity function which relates to the DPD family belongs to this class of $\phi$ functions.
	
\end{remark}

\begin{remark}
	That the class of  functions described in the previous remark does not completely characterize the FDPDs can be seen by choosing  $\varphi(x) = \log x,$ for all $x \geq 0$, with the convention that $\log 0 := -\infty$. In this case $\varphi$ is a concave function but the corresponding FDPD  also satisfies the divergence conditions and gives rise to the logarithmic density power divergence  (LDPD) family already introduced in Section~\ref{dpddef}. In this case $\varphi^{\prime}(1) = 1$.
\end{remark}


We expect that the members of the FDPD family will possess useful robustness properties and have other information theoretic utilities which could make it interesting to examine these divergences and therefore it is natural to ask whether we can characterize all the functions $\varphi$ which give rise to a divergence in~(\ref{defn2}) and thus can obtain a complete description of the FDPD family. As already indicated, the main objective of this article is to discuss this characterization. 

\section{Characterization of the FDPD family}
In this section we will assume that the dominating measure $\mu$ is actually the Lebesgue measure on the real line and therefore the FDPD is a family of divergences on the space of probability density functions.

Our first result states a general sufficient condition on the function $\varphi$ which will guarantee that $\text{FDPD}_{\varphi,\alpha}$ is a valid divergence for all $\alpha > 0.$

\begin{proposition}
	\label{propn1}
	Suppose $\varphi :[0,\infty) \longrightarrow [-\infty,\infty]$ is a function such that the function $\psi : [-\infty, \infty) \longrightarrow [-\infty,\infty]$ defined as $\psi (x) := \varphi(e^x), \;{\rm for~all}~ x \in [-\infty, \infty)$ is convex and strictly increasing on its domain. Moreover assume that $\psi(\mathbb{R}) \subseteq \mathbb{R}$. Then $\text{FDPD}_{\varphi,\alpha}$ is a valid divergence for each fixed $\alpha > 0$, according to the definition in~(\ref{defn2}).
\end{proposition}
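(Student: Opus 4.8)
The plan is to reduce the whole statement to a single application of Hölder's inequality together with the convexity and monotonicity of $\psi$. Write $A := \int f^{\alpha+1}$, $B := \int f^{\alpha}g$ and $C := \int g^{\alpha+1}$. Since $f,g \in \mathcal{L}_{\mu,\alpha}$ are probability densities, $A$ and $C$ are finite and strictly positive, so $a := \log A$ and $c := \log C$ are genuine real numbers; Hölder's inequality with conjugate exponents $(\alpha+1)/\alpha$ and $\alpha+1$ gives $B \le A^{\alpha/(\alpha+1)} C^{1/(\alpha+1)} < \infty$, so that $b := \log B \in [-\infty,\infty)$. Taking logarithms, this Hölder bound becomes the affine inequality
$$ b \;\le\; \frac{\alpha}{\alpha+1}\,a + \frac{1}{\alpha+1}\,c, $$
and the equality case of Hölder shows that equality holds here if and only if $f^{\alpha+1}$ and $g^{\alpha+1}$ are proportional a.e., which, because both densities integrate to one, forces $f=g$ a.e. This log-substitution is exactly why $\psi = \varphi \circ \exp$ is the natural object: Hölder's inequality becomes convex geometry in the logarithmic scale.

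Next I would rewrite the divergence in terms of $\psi$. Since $\varphi(t)=\psi(\log t)$ for $t>0$, we have $\varphi(A)=\psi(a)$, $\varphi(B)=\psi(b)$ and $\varphi(C)=\psi(c)$, whence
$$ \text{FDPD}_{\varphi,\alpha}(g,f) = \psi(a) - \Bigl(1+\tfrac{1}{\alpha}\Bigr)\psi(b) + \tfrac{1}{\alpha}\psi(c). $$
The assumption $\psi(\mathbb{R})\subseteq\mathbb{R}$ guarantees that $\psi(a),\psi(c)$ are finite and, since $\psi$ maps into $[-\infty,\infty)$, the middle term can never be $+\infty$; hence the expression is well defined with no $\infty-\infty$ ambiguity. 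Now I would chain two inequalities. Writing $m := \frac{\alpha}{\alpha+1}a + \frac{1}{\alpha+1}c$, monotonicity of $\psi$ and the affine bound give $\psi(b)\le\psi(m)$, and because the coefficient $-(1+1/\alpha)$ is negative this yields $-(1+\tfrac1\alpha)\psi(b)\ge -(1+\tfrac1\alpha)\psi(m)$. Convexity of $\psi$ applied to the convex combination $m$ of $a$ and $c$ gives $\psi(m)\le \frac{\alpha}{\alpha+1}\psi(a)+\frac{1}{\alpha+1}\psi(c)$. Substituting and using the arithmetic identities $(1+\tfrac1\alpha)\tfrac{\alpha}{\alpha+1}=1$ and $(1+\tfrac1\alpha)\tfrac{1}{\alpha+1}=\tfrac1\alpha$, the $\psi(a)$ and $\psi(c)$ terms cancel exactly and I obtain $\text{FDPD}_{\varphi,\alpha}(g,f)\ge 0$.

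Finally I would settle the equality clause. If $f=g$ a.e. then $A=B=C$, so $a=b=c$ and the three coefficients sum to zero, giving $\text{FDPD}_{\varphi,\alpha}(g,f)=0$. Conversely, if the divergence equals $0$ then, since it was bounded below by $0$ through the two inequalities above, both must be equalities; in particular $\psi(b)=\psi(m)$, and here the \emph{strict} monotonicity of $\psi$ forces $b=m$, i.e. equality in Hölder, hence $f=g$ a.e. by the proportionality argument above. The step requiring the most care is this equality analysis at the boundary: one must check the degenerate case $B=0$ (i.e. $b=-\infty$, occurring when $f$ and $g$ have essentially disjoint supports), where $\psi(b)$ may equal $-\infty$ and the divergence is $+\infty$; this is consistent, since $B=0$ already precludes $f=g$. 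It is precisely the strictness of the monotonicity, rather than mere monotonicity, that drives the ``only if'' direction, so I would treat the finite and the $b=-\infty$ cases separately to be sure the argument never quietly collapses the strict inequality.
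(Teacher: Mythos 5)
Your proof is correct and follows essentially the same route as the paper's: rewrite the three integrals on the logarithmic scale, apply H\"{o}lder's inequality, then combine the convexity and strict monotonicity of $\psi$, with the equality case traced back through strict monotonicity to the equality condition in H\"{o}lder. Your explicit treatment of the degenerate case $\int f^{\alpha}g=0$ (where $\psi(-\infty)$ may be $-\infty$) is a welcome extra precaution that the paper handles only implicitly, but it does not change the substance of the argument.
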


\begin{proof}
	We start by observing that for any $ f,g \in \mathcal{L}_{\mu,\alpha}$, the quantities $\int_{\mathbb{R}} f^{1+\alpha} $ and $\int_{\mathbb{R}} g^{1+\alpha} $ are finite and non-zero. Therefore the expression in~(\ref{defn2}) is well-defined since $\varphi(x) = \psi(\log x) \in \mathbb{R}$ for all $x \in (0, \infty)$. Now, in order to show that $\text{FDPD}_{\varphi,\alpha}$ is a valid divergence, we need to establish that $\text{FDPD}_{\varphi,\alpha} (g,f)$ is non-negative for all choices of $g,f \in \mathcal{L}_{\mu, \alpha},$ and it is exactly zero if and only if $g=f, \; a.e.[\mu].$ For $\alpha > 0$, using the convexity of the function $\psi$ we can conclude that 
	\begin{align}
		\alpha \varphi \Big(\int_{\mathbb{R}} f^{\alpha+1} \Big) +  \varphi \Big( \int_{\mathbb{R}} g^{\alpha+1} \Big) 
		&= \alpha \psi \Big(\log \int_{\mathbb{R}} f^{\alpha+1} \Big)+  \psi \Big(\log \int_{\mathbb{R}} g^{\alpha+1} \Big) \nonumber\\
		\label{convexstep} & \geq  (1+\alpha) \psi \Big( \dfrac{\alpha}{1+\alpha} \log \int_{\mathbb{R}} f^{\alpha+1} + \dfrac{1}{1+\alpha}  \log \int_{\mathbb{R}} g^{\alpha+1} \Big).
	\end{align}
	On the other hand, for $\alpha >0$, using \textit{H\"{o}lder's inequality} on the functions $f^{\alpha}$ and $g$ with dual indices $\Big( \dfrac{1+\alpha}{\alpha}, 1+\alpha \Big)$ we obtain,
	\begin{equation}
		\label{holder}
		{\left( \int_{\mathbb{R}} f^{\alpha+1}\right)}^{\frac{\alpha}{1+\alpha}} {\left( \int_{\mathbb{R}} g^{\alpha+1}\right)}^{\frac{1}{1+\alpha}} \geq \int_{\mathbb{R}} f^{\alpha}g,
	\end{equation}
	which is equivalent to
	\begin{equation}
		\label{holder2}
		\dfrac{\alpha}{1+\alpha} \log \int_{\mathbb{R}} f^{\alpha+1} + \dfrac{1}{1+\alpha}  \log \int_{\mathbb{R}} g^{\alpha+1} \geq \log \int_{\mathbb{R}} f^{\alpha}g.
	\end{equation}
	Expression~(\ref{convexstep}) and~(\ref{holder2}) along with the strict monotonicity of $\psi$ imply that,
	\begin{align}
		\alpha \varphi \Big(\int_{\mathbb{R}} f^{\alpha+1} \Big) +  \phi \Big( \int_{\mathbb{R}} g^{\alpha+1} \Big) 
		& \geq  (1+\alpha) \psi \Big( \dfrac{\alpha}{1+\alpha} \log \int_{\mathbb{R}} f^{\alpha+1} + \dfrac{1}{1+\alpha}  \log \int_{\mathbb{R}} g^{\alpha+1} \Big) \nonumber \\
		\label{inc}& \geq  (1+\alpha) \psi \Big( \log \int_{\mathbb{R}} f^{\alpha}g \Big) \\
		&=  (1+\alpha) \varphi \Big( \int_{\mathbb{R}} f^{\alpha}g \Big),
	\end{align}
	which is equivalent to the statement that $\text{FDPD}_{\varphi,\alpha}(g,f) \geq 0$. For the equality $\text{FDPD}_{\varphi,\alpha}(g,f)=0$ to hold, we must have equality in~(\ref{convexstep}) and~(\ref{inc}). By strict monotonicity of $\psi$, the equality in~(\ref{inc}) implies equality in~(\ref{holder}) which will happen only if $f^{1+\alpha}=g^{1+\alpha}, \; a.e.[\mu]$, which is equivalent to $f=g, a.e.[\mu]$. On the other hand, if $f=g,\; a.e.[\mu]$, then clearly $\text{FDPD}_{\varphi,\alpha}(g,f)=0$ by~(\ref{defn2}). This completes our proof.
\end{proof} 
Now we shall show that the condition on $\varphi$ stated in Proposition~\ref{propn1} is indeed a necessary condition for generating a divergence family, for any fixed $\alpha >0$.

\begin{proposition}
	\label{propn2}
	Fix $\alpha \in (0, \infty)$. Suppose $\varphi : [0,\infty) \longrightarrow [-\infty, \infty]$ is a function such that that $\text{FDPD}_{\varphi,\alpha}$ is a valid divergence. Then the function $\psi : [-\infty, \infty) \longrightarrow [-\infty,\infty]$ defined as $\psi (x) := \varphi(e^x), \;{\rm for~all}~ x \in [-\infty, \infty)$ is convex and strictly increasing on its domain with $\psi(\mathbb{R}) \subseteq \mathbb{R}$.  
\end{proposition}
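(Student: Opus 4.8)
My plan is to work throughout with the transformed function $\psi(x)=\varphi(e^{x})$, so that writing $a=\log\int f^{\alpha+1}$, $b=\log\int f^{\alpha}g$ and $c=\log\int g^{\alpha+1}$, and setting $\lambda=\tfrac{\alpha}{1+\alpha}\in(0,1)$, the hypothesis that $\text{FDPD}_{\varphi,\alpha}$ is a valid divergence becomes two usable facts: (i) for every realizable triple $(a,b,c)$ one has $\alpha\psi(a)+\psi(c)\ge(1+\alpha)\psi(b)$, with equality only when $f=g$; and (ii) by H\"older's inequality every realizable triple satisfies $b\le \lambda a+(1-\lambda)c$, where $b=\lambda a+(1-\lambda)c$ forces $f=g$ and hence $a=b=c$. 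Each of the three conclusions — finiteness, strict monotonicity, convexity — will be extracted by engineering densities $f,g$ realizing a suitable triple. I would first dispose of finiteness: given $x\in\mathbb R$, a uniform density on an interval of length $e^{-x/\alpha}$ has $\int f^{\alpha+1}=e^{x}$, and taking $g=f$ makes all three integrals equal to $e^{x}$; since the coefficients $1,-(1+\tfrac1\alpha),\tfrac1\alpha$ sum to zero, the value $\text{FDPD}_{\varphi,\alpha}(f,f)=0$ is well defined only if $\varphi(e^{x})=\psi(x)$ is finite, giving $\psi(\mathbb R)\subseteq\mathbb R$.

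For strict monotonicity I would fix a real $a$ and realize all triples of the special form $(a,b,a)$ with $b<a$. Take $f$ and $g$ to be a common height $h$ on two sets of equal measure $m$ (so $hm=1$ and $\int f^{\alpha+1}=\int g^{\alpha+1}=h^{\alpha+1}m=e^{a}$) whose overlap has measure $t\in[0,m]$; then $\int f^{\alpha}g=h^{\alpha+1}t$ sweeps out $(0,e^{a}]$, so every value $b<a$ is attained with $f\ne g$. For such a configuration the $A$- and $C$-terms coincide and $\text{FDPD}_{\varphi,\alpha}(g,f)=(1+\tfrac1\alpha)\bigl(\psi(a)-\psi(b)\bigr)$, which must be strictly positive because $f\ne g$; hence $\psi(b)<\psi(a)$ for every $b<a$, i.e.\ $\psi$ is strictly increasing.

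The substantive step is convexity, and here the decisive device is that the dominating measure is infinite. Fix reals $a\ne c$ and let $b^{*}=\lambda a+(1-\lambda)c$ be the H\"older bound. With $f$ uniform as above and $s=e^{(c-a)/(1+\alpha)}$, I would take $g$ to be $sf$ corrected by a very low, very wide plateau chosen to restore $\int g=1$; because this correction can be spread over an arbitrarily large region, its contributions to $\int g^{\alpha+1}$ and to $\int f^{\alpha}g$ are negligible, so $\int g^{\alpha+1}\to e^{c}$ while $\int f^{\alpha}g\to s\,e^{a}=e^{b^{*}}$. This is exactly where infinite measure matters: the normalization constraint, being an inner product against the non-integrable constant $1$, decouples from the $L^{\alpha+1}$ geometry, so the H\"older bound can be approached even though it is never attained for $a\ne c$. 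Realizing $b\uparrow b^{*}$ in (i) and invoking strict monotonicity yields $\alpha\psi(a)+\psi(c)\ge(1+\alpha)\psi(b^{*}{-})$, that is the left-limit Jensen inequality $\psi\bigl((\lambda a+(1-\lambda)c)-\bigr)\le\lambda\psi(a)+(1-\lambda)\psi(c)$ for all $a,c$.

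Finally I would upgrade this to honest convexity. Passing to the left-continuous modification $\psi^{*}(x)=\psi(x-)$ and evaluating the inequality at continuity points of the monotone $\psi$ (all but countably many points), a density-and-monotonicity argument gives the genuine inequality $\psi^{*}(\lambda a+(1-\lambda)c)\le\lambda\psi^{*}(a)+(1-\lambda)\psi^{*}(c)$ for \emph{all} $a,c$. Since $\psi^{*}$ is monotone, hence Lebesgue measurable, the Sierpi\'nski-type theorem that a measurable function obeying Jensen's inequality for a single fixed ratio $\lambda\in(0,1)$ must be convex shows $\psi^{*}$ is convex, hence continuous; comparing $\psi$ with the continuous $\psi^{*}$ then forces $\psi$ to have no jumps, so $\psi=\psi^{*}$ is convex. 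I expect the main obstacle to be precisely this last passage: the H\"older bound is never attained off the diagonal, so the divergence inequality only controls the \emph{left limit} $\psi(b^{*}{-})$, and excluding a downward jump of $\psi$ at $b^{*}$ — equivalently, proving continuity — is the delicate point that the left-continuous-modification and measurability argument is designed to resolve.
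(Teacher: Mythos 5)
Your overall architecture coincides with the paper's: establish finiteness and strict monotonicity of $\psi$ from explicit density pairs, extract a one\mbox{-}sided $\lambda$-Jensen inequality with $\lambda=\alpha/(1+\alpha)$ by driving the middle integral toward the H\"older bound, and then upgrade $\lambda$-convexity plus monotone local boundedness to genuine convexity and continuity (the paper proves the needed regularity lemmas for $\lambda$-convex functions in its Appendix rather than citing a Sierpi\'nski/Kuhn-type theorem, and it uses power-law densities $f_\theta(x)=(\gamma+1)\theta^{-\gamma-1}x^{\gamma}\mathbbm{1}_{(0,\theta)}(x)$ with $\gamma$ ranging over $(-1/(1+\alpha),\infty)$ instead of your uniforms-plus-plateaus). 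Your monotonicity step via overlapping supports is clean and even handles the endpoint $-\infty$ in one stroke, and the finiteness argument from well-definedness of $\text{FDPD}(f,f)=0$ is legitimate.

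There are, however, two concrete problems in the convexity step. First, your construction $g=sf+\text{plateau}$ with $s=e^{(c-a)/(1+\alpha)}$ requires $s\le 1$, i.e.\ $c<a$; when $c>a$ you cannot restore $\int g=1$ by adding a non-negative plateau. Since the weight $\lambda=\alpha/(1+\alpha)$ sits specifically on the $f$-slot and $\lambda\neq 1/2$ in general, the inequality for pairs with $c<a$ alone is only ``one-sided'' $\lambda$-convexity and does not suffice; you need a second construction (perturb $f$ around a uniform $g$, giving $\int f^{\alpha+1}\downarrow e^{a}$ with $a<c$) to cover the other ordering, exactly as the paper needs both of its branches (its inequalities for $x>y$ and $x<y$). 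Second, the one-sided limit is not where you place it: the plateau contributes \emph{nothing} to $\int f^{\alpha}g$ because $f$ vanishes on it, so in your construction $\int f^{\alpha}g=se^{a}=e^{b^{*}}$ \emph{exactly}, while it is $\int g^{\alpha+1}=e^{c}+\epsilon^{\alpha+1}L$ that converges to $e^{c}$ \emph{from above}. The limiting inequality therefore reads $\lambda\psi(a)+(1-\lambda)\psi(c+)\ge\psi(b^{*})$, involving the right-hand limit at $c$ rather than the left-hand limit at $b^{*}$; since $\psi(c+)\ge\psi(c)$ for an increasing $\psi$, this does not directly yield the claimed inequality at $c$, and your proposed left-continuous modification $\psi(\cdot-)$ is the wrong regularization for it (you want the right-continuous one, or you must pass to continuity points first). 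Both defects are repairable within your framework and the repaired argument lands on the same regularity lemmas the paper uses, but as written the convexity step does not go through.
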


\begin{proof}
	We shall use the idea of computing the divergence between two appropriate probability density functions and extracting the property of the function $\varphi$ from it. Fix any real $\gamma > -1/(1+\alpha)$ and consider the family of probability densities given by 
	\begin{equation}{\label{def}}
		f_{\theta}(x) := (\gamma + 1) \theta^{-\gamma-1} x^{\gamma} \mathbbm{1}_{(0, \theta)}(x), \; \forall \; x \in \mathbb{R}, \; \theta >0,
	\end{equation}
	where $\mathbbm{1}_A$ denotes the indicator function of the set $A$. These are valid probability densities since $\gamma >-1 $. Easy computations show that
	\begin{equation}
		\label{values}
		\int_{\mathbb{R}} f_{\theta}^{1+\alpha} = \dfrac{(\gamma+1)^{1+\alpha}}{1+\gamma(1+\alpha)}\theta^{-\alpha}, \; \; \forall \; \theta >0 ,
	\end{equation}
	and for any $\theta, \tau >0$
	\begin{equation}
		\label{values1}
		\int_{\mathbb{R}} f_{\theta}^{\alpha}f_{\tau} = \begin{cases}
			\dfrac{(\gamma+1)^{1+\alpha}}{1+\gamma(1+\alpha)} \theta^{-\gamma\alpha-\alpha}\tau^{\gamma\alpha}, \;\;\;\;\; \text{if } \theta > \tau, \\
			\dfrac{(\gamma+1)^{1+\alpha}}{1+\gamma(1+\alpha)} \theta^{1+\gamma-\alpha}\tau^{-\gamma-1}, \;\; \text{if } \theta \leq \tau.
		\end{cases}
	\end{equation}
	Therefore, the property that $\text{FDPD}_{\varphi,\alpha}(g,f) \geq 0$ along with equality if and only if $g=f, a.e.[\mu]$ yields that
	\begin{equation}
		\label{exp2}
		\varphi(C\theta^{-\alpha}) - \Big(1+\dfrac{1}{\alpha}\Big) \varphi(C \theta^{-\gamma\alpha-\alpha}\tau^{\gamma \alpha}) + \dfrac{1}{\alpha} \varphi(C\tau^{-\alpha}) > 0, \;\; {\rm if} \; \theta >\tau >0,
	\end{equation}
	and 
	\begin{equation}
		\label{exp3}
		\varphi(C\theta^{-\alpha}) - \Big(1+\dfrac{1}{\alpha}\Big) \varphi(C\theta^{1+\gamma-\alpha}\tau^{-\gamma-1} ) + \dfrac{1}{\alpha} \varphi(C\tau^{-\alpha}) > 0, \; {\rm if} \; \tau >\theta >0,
	\end{equation}
	where $C:=(\gamma+1)^{1+\alpha}/(1+\gamma(1+\alpha))$. The assertion that the expressions in the left hand sides of~(\ref{exp2}) and~(\ref{exp3}) are well-defined is also part of the implication. Now fix any $x,y \in \mathbb{R}$. If $x>y$, plug in $\theta = C^{1/\alpha}\exp(-x/\alpha)$ and $\tau = C^{1/\alpha}\exp(-y/\alpha)$ in Equation~(\ref{exp3}). Notice that $x>y$ will guarantee that $\theta< \tau$. Therefore we get
	\begin{equation}
		\label{exp4}
		\varphi(e^x) + \dfrac{1}{\alpha}\varphi(e^y) > \Big(1+\dfrac{1}{\alpha}\Big) \varphi \Bigg(\exp \Bigg(\left( 1- \dfrac{\gamma+1}{\alpha}\right)x + \dfrac{\gamma+1}{\alpha}y\Bigg)\Bigg), 
	\end{equation}
	for all $x >y \in \mathbb{R}$, which on simplification yields
	\begin{equation}
		\label{exp5}
		\dfrac{\alpha}{1+\alpha}\psi(x) + \dfrac{1}{1+\alpha}\psi(y) >  \psi  \Bigg(\left( 1- \dfrac{\gamma+1}{\alpha}\right)x + \dfrac{\gamma+1}{\alpha}y\Bigg), \; \forall \; x >y \in \mathbb{R}.
	\end{equation} 
	Similar manipulation with (\ref{exp2}) leads us to the following observation.
	\begin{equation}
		\label{exp5s}
		\dfrac{\alpha}{1+\alpha}\psi(x) + \dfrac{1}{1+\alpha}\psi(y) >  \psi  \Big(\left( 1+\gamma\right)x  -\gamma y\Big),  \; \forall \; x <y \in \mathbb{R}.
	\end{equation} 
	
	We shall now proceed with some appropriate choices for $\gamma$. If we take $\gamma=0$ in (\ref{exp5s}), we obtain that $\psi$ is strictly increasing on $\mathbb{R}$. To prove that $\psi$ is indeed strictly increasing on $[-\infty, \infty)$, take $f=\theta^{-1}\mathbbm{1}_{(0,\theta)}$ and $g=\theta^{-1}\mathbbm{1}_{(\theta,2\theta)}$ for some $\theta >0$.  In this case,
	$$ \int_{\mathbb{R}} f^{\alpha}g =0, \; \int_{\mathbb{R}} f^{\alpha+1} =\int_{\mathbb{R}} g^{\alpha+1} = \theta^{-\alpha},$$
	and hence
	$$ 0 < \text{FDPD}_{\varphi,\alpha}(f,g) = \left(1 + \dfrac{1}{\alpha} \right) \left( \varphi(\theta^{-\alpha})-\varphi(0)\right). $$
	Since this holds for all $\theta>0$, we have our required strict monotonicity of $\varphi$ on $[0,\infty)$, which proves the fact that $\psi$ is strictly increasing on $[-\infty,\infty)$. Observe that, strict monotonicity of $\psi$ on $\mathbb{R}$ implies $\psi(\mathbb{R}) \subseteq \mathbb{R}.$ All that remains to show now is the convexity of the function $\psi$.

	Fix any $x>y\in \mathbb{R}$ and take $\gamma=\gamma_n:=-(1+\alpha)^{-1}+1/n$ in~(\ref{exp5}). Since,
	$$ \left( 1- \dfrac{\gamma_n+1}{\alpha}\right)x + \dfrac{\gamma_n+1}{\alpha}y = \dfrac{\alpha}{1+\alpha} x + \dfrac{1}{1+\alpha} y - \dfrac{x-y}{\alpha n} \uparrow \dfrac{\alpha}{1+\alpha} x + \dfrac{1}{1+\alpha} y,\; \text{ as } n \to \infty, $$
	we can conclude that 
	\begin{equation}
		\label{exp6}
		\dfrac{\alpha}{1+\alpha}\psi(x) + \dfrac{1}{1+\alpha}\psi(y) \geq   \psi  \Bigg[\Bigg( \dfrac{\alpha}{1+\alpha} x + \dfrac{1}{1+\alpha} y \Bigg)-\Bigg], \; \forall \; x >y \in \mathbb{R},
	\end{equation} 
	where $\psi(u-) := \lim_{v \uparrow u} \psi(v)$, for all $u \in \mathbb{R}$; which exists since $\psi$ is monotone. Similar manipulation with~(\ref{exp5s}) yields the inequality in~(\ref{exp6}) for $x<y \in \mathbb{R}$. Monotonicity of $\psi$ also guarantees that $\psi(\cdot-)$ is finite on $\mathbb{R}$. Fix $x,y \in \mathbb{R}$ and get sequences $x_n \uparrow x$ and $y_n \uparrow y$. Define $z_n := \alpha(1+\alpha)^{-1} x_n + (1+\alpha)^{-1}y_n -1/n$, for all $n \geq 1$. Clearly, $z_n \uparrow z:=\alpha(1+\alpha)^{-1} x + (1+\alpha)^{-1}y$ and 
	\begin{equation}
		\label{exp7}
		\dfrac{\alpha}{1+\alpha}\psi(x_n) + \dfrac{1}{1+\alpha}\psi(y_n) \geq   \psi  \Bigg[\Bigg( \dfrac{\alpha}{1+\alpha} x_n + \dfrac{1}{1+\alpha} y_n \Bigg)-\Bigg] \geq \psi(z_n), \; \forall \; n \geq 1.
	\end{equation}
	Taking $n \to \infty$ in~(\ref{exp7}), we can conclude that 
	$$ \dfrac{\alpha}{1+\alpha}\psi(x-) + \dfrac{1}{1+\alpha}\psi(y-) \geq \psi(z-),$$
	implying that $\psi(\cdot-)$ is indeed $\alpha/(1+\alpha)$-convex, see Definition~\ref{lambd} in the Appendix. The function $\psi(\cdot-)$, being finite and non-decreasing on $\mathbb{R}$, is  bounded on any finite interval. Applying Lemma~\ref{lem1} and Proposition~\ref{lambdacon:main}, we can conclude that $\psi(\cdot-)$ is convex and continuous on $\mathbb{R}$. Lemma~\ref{lem3} yields that $\psi(\cdot-)=\psi$; hence $\psi$ is convex on $\mathbb{R}$. Monotonicity of $\psi$ on $[-\infty, \infty)$ guarantees that it is indeed convex on $[-\infty, \infty)$. This completes the proof.

\end{proof}

\begin{remark}
	The proof of Proposition~\ref{propn2} does not assume continuity of $\varphi$ (or equivalently of $\psi$) a priori. Instead we have proved that $\psi$ is convex on $\mathbb{R}$, implying that it  also has to be continuous on $\mathbb{R}$.
\end{remark}

\begin{remark}
	Note that if $\varphi$ is convex and strictly increasing then $\psi$ defined as in Proposition~\ref{propn1} and Proposition~\ref{propn2} is strictly convex and strictly increasing. On the other hand, if $\varphi=\log$, then $\psi$ is  the identity function and therefore convex and strictly increasing.
\end{remark}

\begin{remark}
	There exists other directions for proving the necessity part in Proposition~\ref{propn2} assuming some smoothness conditions for the function $\varphi$. One such direction may be provided by the method of  \cite{J01}. Any kind of smoothness assumption being redundant in our proof makes our characterization complete. In fact, it appears that the approach in \cite{J01} might be refined by the approach in the present paper, rather than the other way around. (This was also suggested by one of the reviewers). We hope to explore this in our future work. 
	
	However from a practical point of view and for large sample consistency or influence function calculations, we would probably need some differentiability conditions on $\varphi$.
\end{remark}


\begin{remark}
	One purpose of characterizing this class of divergences will be to identify new estimators which will be obtained as the minimizer of a divergence between an empirical estimate (see Remark~\ref{kernel}) of the true density $g$ and the model density $f_\theta$ in terms of the parameter $\theta$ over a suitable parameter space $\Theta$.  A natural follow up of the present work will be to look at properties of the minimum FDPD estimators from an overall standpoint and explore whether a general proof of asymptotic normality is possible under the presently existing conditions on the function $\varphi$, or under minimal additional conditions (apart from standard model conditions). 
	
\end{remark}

\begin{remark}{\label{kernel}}
	It may be noted that all minimum FDPD estimators are non-kernel divergence estimators in
	the sense of \cite{J01}, although not all minimum FDPD estimators are M-estimators. While the present
	paper is focused entirely on the characterization issue, eventually one would also like to know
	how useful are the inference procedures resulting from the minimization of divergences within
	the FDPD class (as already observed in the previous remark). In that respect the non-kernel divergence 
	property will lend a practical edge to the estimators and other inference procedures based on this family 
	in comparison with divergences which require an active use of a non-parametric smoothing technique in 
	their construction.
\end{remark}

Proposition~\ref{propn1} and Proposition~\ref{propn2} provide the complete characterization of the FDPD family and class of $\varphi$ functions generating them. We trust that this characterization describes the class within which one can search for suitable minimum divergence procedures exhibiting good balance between model efficiency and robustness.

\section{Acknowledgements}
We are grateful to two anonymous referees and the Associate Editor, whose suggestions have led to an improved version of the paper. In particular, it has allowed us to prove Proposition~\ref{propn2} without smoothness conditions (or even the assumption of continuity) on $\varphi$. Also, a comment about possible $\alpha$-specific $\varphi$ functions allowed us to make our result more general.

\section{Appendix}

The proof in Proposition~\ref{propn2} depends on some additional results involving $\lambda$-convex functions and general convex functions. These results, which are used as tools in our main pursuit, are presented here in the Appendix, separately, so as not to lose focus from our main characterization problem. 

\begin{definition}{\label{lambd}}
	Let $\lambda \in (0,1)$ and $-\infty \leq a <b \leq \infty.$ A function $f:(a,b) \to \mathbb{R}$ is said to be $\lambda$-convex if 
	$$ f(\lambda x + (1-\lambda) y ) \leq \lambda f(x) + (1-\lambda)f(y), \; \; \forall \; x,y \in (a,b).$$
\end{definition}

Obviously any convex function is also $\lambda$-convex, though the converse is not generally true. Traditionally, $1/2$-convex functions are  called midpoint convex. Under some further assumptions on $f$, like Lebesgue measurability or boundedness on a set with positive Lebesgue measure, one can prove that midpoint convex functions are indeed convex; see~\cite[Section I.3]{donoghue} for an extensive account of these kind of results. Here we shall prove a similar result in Proposition~\ref{lambdacon:main} for $\lambda$-convex functions for any $\lambda \in (0,1)$. Lemma~\ref{lem1} and Lemma~\ref{lem2} are instrumental in proving Proposition~\ref{lambdacon:main}.

\begin{lemma}{\label{lem1}}
	Suppose that $f:(a,b) \to \mathbb{R}$ is $\lambda$-convex. Then $f$ is continuous at $x \in (a,b)$ if and only if $f$ is bounded on an interval around $x$.
\end{lemma}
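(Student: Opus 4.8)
The plan is to prove the two implications separately. The forward implication is immediate: if $f$ is continuous at $x$, then applying the definition of continuity with tolerance $1$ produces a $\delta>0$ such that $|f(y)-f(x)|<1$ whenever $|y-x|<\delta$, so $f$ is bounded on $(x-\delta,x+\delta)$. The substantive content is the reverse implication, that local boundedness forces continuity; this adapts the classical Bernstein--Doetsch argument for midpoint convex functions to a single ratio $\lambda$.

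Assume $|f|\le M$ on an interval $I=(x-r,x+r)\subseteq(a,b)$. Since $\lambda$-convexity is preserved both under an affine reparametrisation of the domain and under subtraction of a constant, I may normalise $x=0$ and $f(0)=0$, the bound $|f|\le M$ on $(-r,r)$ being retained. First I would record the algebraic structure of the set of admissible weights $T:=\{t\in[0,1]:f(tu+(1-t)v)\le t f(u)+(1-t)f(v)\ \text{for all } u,v\in(a,b)\}$. By Definition~\ref{lambd} we have $\lambda\in T$; interchanging $u$ and $v$ shows $t\in T\Rightarrow 1-t\in T$; and $T$ is closed under multiplication, since for $s,t\in T$, writing $p=tu+(1-t)v$ gives $sp+(1-s)v=st\,u+(1-st)v$ and hence $f(st\,u+(1-st)v)\le s f(p)+(1-s)f(v)\le st\,f(u)+(1-st)f(v)$. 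Consequently $\lambda^n\in T$ and $1-\lambda^n\in T$ for every $n$, so $T$ contains a sequence of weights $\epsilon_n=\lambda^n$ decreasing to $0$.

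Next I would produce a two-sided estimate near $0$ valid for each $\epsilon\in T\cap(0,1)$ and each $t$ with $|t|<\epsilon r$. For the upper bound, the decomposition $t=\epsilon(t/\epsilon)+(1-\epsilon)\cdot 0$ keeps both nodes in $(-r,r)$ and yields $f(t)\le \epsilon f(t/\epsilon)+(1-\epsilon)f(0)\le \epsilon M$. For the lower bound, I would instead place the large weight on $t$: writing $0=(1-\epsilon)t+\epsilon w$ with $w=-\tfrac{1-\epsilon}{\epsilon}t$ gives $|w|<(1-\epsilon)r<r$, and since $1-\epsilon\in T$, $0=f(0)\le(1-\epsilon)f(t)+\epsilon f(w)$, whence $f(t)\ge-\tfrac{\epsilon}{1-\epsilon}M$. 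Letting $\epsilon=\lambda^n\downarrow 0$ makes both $\epsilon M$ and $\tfrac{\epsilon}{1-\epsilon}M$ tend to $0$, so for any target $\eta>0$ one chooses $n$ with both bounds below $\eta$ and takes $\delta=\lambda^n r$; then $|t|<\delta$ forces $|f(t)|<\eta$, establishing continuity at $0$.

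The main obstacle is organising the two-sided estimate correctly: the convexity inequality at weight $\epsilon$ naturally controls $f$ only at the node carrying weight $\epsilon$, so the upper and lower bounds must use complementary weights ($\epsilon$ and $1-\epsilon$, both guaranteed to lie in $T$) in order that the coefficient of $f(t)$ stays bounded away from $0$ in the lower bound while the auxiliary points $t/\epsilon$ and $w$ remain inside $I$. The only structural input beyond boundedness is that $T$ contains weights arbitrarily close to $0$, which the closure properties above supply without any appeal to density of $T$ in $[0,1]$.
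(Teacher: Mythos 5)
Your proof is correct, but it takes a genuinely different route from the paper's. The paper runs a limit-superior/limit-inferior argument directly at the point $x$: writing $y=\lambda\frac{y-(1-\lambda)x}{\lambda}+(1-\lambda)x$ it derives $\limsup_{y\to x}f(y)\le\lambda\limsup_{y\to x}f(y)+(1-\lambda)f(x)$, uses the finiteness supplied by local boundedness to cancel and conclude $\limsup_{y\to x}f(y)\le f(x)$, and a companion decomposition for the $\liminf$; a single application of $\lambda$-convexity suffices on each side. You instead iterate the weight: you observe that the set $T$ of admissible weights contains $\lambda$, is closed under $t\mapsto 1-t$ and under products (both verifications are correct, and the containment of the auxiliary points $t/\epsilon$ and $w=-\tfrac{1-\epsilon}{\epsilon}t$ in the bounded interval is checked properly), so $\lambda^n\in T$ with $\lambda^n\downarrow 0$, and then the two-sided estimate $-\tfrac{\epsilon}{1-\epsilon}M\le f(t)\le\epsilon M$ for $|t|<\epsilon r$ yields continuity. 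What your version buys is an explicit, quantitative modulus of continuity ($|f(t)-f(x)|\le\tfrac{\lambda^n}{1-\lambda^n}M$ on a ball of radius $\lambda^n r$) and a cleaner isolation of the only structural fact needed, namely that $T$ contains weights tending to $0$ rather than a dense set; the paper's version is shorter and avoids the bookkeeping of the weight semigroup, at the cost of being purely qualitative. Both are valid adaptations of the Bernstein--Doetsch mechanism to a single ratio $\lambda$.
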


\begin{proof}
	The proof of the only if part is trivial from the definition of continuity. The proof of the if part is inspired by the proof of the theorem in~\cite[pp.12]{donoghue}. Suppose that $f$ is bounded on an interval around $x$. This condition can equivalently be written as 
	$$ - \infty < \liminf_{y \to x} f(y) \leq f(x) \leq  \limsup_{y \to x} f(y) < \infty.$$
	Applying $\lambda$-convexity for the function $f$ we can write the following.
	\begin{align*}
		\limsup_{y \to x} f(y) &= \limsup_{y \to x} f \left( \lambda \dfrac{y-(1-\lambda)x}{\lambda} + (1-\lambda)x\right) \\
		& \leq \limsup_{y \to x} \left[ \lambda f\left( \dfrac{y-(1-\lambda)x}{\lambda}\right) + (1-\lambda) f(x)\right] \\
		& = \lambda \limsup_{y \to x}  f\left( \dfrac{y-(1-\lambda)x}{\lambda}\right) + (1-\lambda) f(x) \leq \lambda \limsup_{y \to x} f(y) + (1-\lambda)f(x),
	\end{align*}
	where the last inequality follows from the observation that $(y-(1-\lambda)x)/\lambda$ converges to $x$ if $y$ converges to $x$. Since $\limsup_{y \to x} f(y)$ is finite, we can conclude that it is at most $f(x)$ and hence equal to $f(x)$. Applying $\lambda$-convexity again,
	\begin{align*}
		f(x) = \liminf_{y \to x} f \left( \lambda y + (1-\lambda) \dfrac{x-\lambda y}{1-\lambda} \right) &\leq \liminf_{y \to x} \left[ \lambda f(y) + (1-\lambda) f \left( \dfrac{x-\lambda y}{1-\lambda}\right) \right] \\
		& \leq \lambda \liminf_{y \to x} f(y) + (1-\lambda) \limsup_{y \to x}  f \left( \dfrac{x-\lambda y}{1-\lambda}\right) \\
		& \leq \lambda \liminf_{y \to x} f(y) + (1-\lambda) \limsup_{y \to x} f(y) \\
		& = \lambda \liminf_{y \to x} f(y) + (1-\lambda) f(x),
	\end{align*}
	implying that $\liminf_{y \to x} f(y)$ is at least $f(x)$ and hence equal to $f(x)$. In other words, both $\limsup_{y \to x} f(y)$ and $\liminf_{y \to x} f(y)$ are equal to $f(x)$ and therefore $f$ is continuous at $x$.
	
\end{proof}

\begin{lemma}{\label{lem2}}
	Suppose that $f:(a,b) \to \mathbb{R}$ is $\lambda$-convex and continuous. Then $f$ is convex.
\end{lemma}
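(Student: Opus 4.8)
The plan is to work with the set
$G := \{\, t \in [0,1] : f(tx+(1-t)y) \le t f(x) + (1-t) f(y) \text{ for all } x,y \in (a,b) \,\}$
of coefficients for which the full convexity inequality already holds, to show that $G$ is dense in $[0,1]$, and then to upgrade from a dense set of coefficients to all of $[0,1]$ by invoking the continuity of $f$. To begin I would record the trivial memberships $0,1 \in G$, together with $\lambda \in G$, the last being exactly the $\lambda$-convexity hypothesis.

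The central step is to establish two closure properties of $G$. Given $t \in G$ and arbitrary $x,y \in (a,b)$, I would set $u := tx + (1-t)y \in (a,b)$ and apply $\lambda$-convexity to the pairs $(x,u)$ and $(u,y)$. Since $\lambda x + (1-\lambda)u = (\lambda + (1-\lambda)t)x + (1-\lambda)(1-t)y$, the first application combined with $f(u) \le t f(x) + (1-t) f(y)$ gives the convexity inequality at coefficient $\lambda + (1-\lambda)t$, so $\lambda + (1-\lambda)t \in G$; the analogous computation with $(u,y)$, using $\lambda u + (1-\lambda)y = \lambda t\, x + (1-\lambda t)y$, gives $\lambda t \in G$. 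Thus $G$ is invariant under the two affine contractions $A(t) = \lambda t$ and $B(t) = \lambda + (1-\lambda)t$.

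Next I would observe that $A$ and $B$ form an iterated function system whose images $A([0,1]) = [0,\lambda]$ and $B([0,1]) = [\lambda,1]$ cover $[0,1]$. Consequently every $t \in [0,1]$ can be captured by a nested sequence of intervals obtained by repeatedly applying $A$ or $B$ according to whether the target lies to the left or right of the current subdivision point; since each map contracts lengths by a factor at most $\max(\lambda,1-\lambda) < 1$, these intervals shrink to $\{t\}$, and their endpoints, being images of $0$ and $1$ under finite compositions of $A$ and $B$, all lie in $G$. Hence $G$ is dense in $[0,1]$. Finally, fixing $x,y \in (a,b)$ and an arbitrary $t \in [0,1]$, I would choose $t_n \in G$ with $t_n \to t$ and pass to the limit in $f(t_n x + (1-t_n)y) \le t_n f(x) + (1-t_n) f(y)$; because $t_n x + (1-t_n)y \to tx+(1-t)y \in (a,b)$ and $f$ is continuous, both sides converge to the desired expressions, yielding the full convexity inequality.

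I expect the main obstacle to be the density of $G$, and specifically the bookkeeping that verifies the closure of $G$ under $A$ and $B$ and the covering argument $A([0,1]) \cup B([0,1]) = [0,1]$ that forces density; once density is in hand, the final passage to the limit is routine given continuity.
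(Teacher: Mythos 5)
Your proposal is correct, and it takes a genuinely different route from the paper. The paper argues by contradiction: it fixes $x<y$, forms the defect function $h(t) := f(tx+(1-t)y) - tf(x) - (1-t)f(y)$, uses continuity to extract the infimum $\beta_0$ of the set where $h$ attains its (positive) maximum $M$, and then applies $\lambda$-convexity at points $\beta_1 < \beta_0 < \beta_2$ with $\lambda\beta_1 + (1-\lambda)\beta_2 = \beta_0$ to contradict the minimality of $\beta_0$ (since $h(\beta_1) < M$ strictly). You instead argue directly: the set $G$ of coefficients for which the full convexity inequality holds contains $0$, $1$ and $\lambda$, and your computations showing closure of $G$ under $A(t)=\lambda t$ and $B(t)=\lambda+(1-\lambda)t$ are correct (the coefficient bookkeeping checks out, e.g.\ $[\lambda+(1-\lambda)t] + (1-\lambda)(1-t) = 1$), so the iterated function system with $A([0,1])\cup B([0,1])=[0,1]$ and contraction ratio $\max(\lambda,1-\lambda)<1$ forces $G$ to be dense, after which continuity closes the gap. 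Your approach is the natural generalization of the classical dyadic-rational argument for midpoint convexity (to which it reduces when $\lambda = 1/2$), and it isolates exactly where continuity is used: $G$ is dense without any regularity hypothesis, and continuity enters only in the final limit. The paper's extremal argument is shorter on the page but leans on continuity twice (attainment of the supremum on the compact interval and closedness of the level set $\{h=M\}$); yours is more constructive and arguably more informative about what $\lambda$-convexity alone gives you.
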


\begin{proof}
	We shall prove the statement by contradiction. Suppose that $f$ is not convex. Then we can find $x,y \in (a,b)$ with $x <y$ and $\beta \in (0,1)$ such that 
	$$ f(\beta x + (1-\beta) y ) > \beta f(x) + (1-\beta) f(y).$$ 
	Define $h :[0,1] \to \mathbb{R}$ as follows.
	$$ h(t) := f(tx+(1-t)y) - tf(x) - (1-t)f(y), \; \; \forall \; t \in [0,1].$$
	Since $f$ is continuous, so is $h$ and $M:=\sup_{t \in [0,1]} h(t) \geq h(\beta) >0$. Let $\beta_0$ be the infimum of the set $\left\{t \in [0,1] : h(t)=M \right\}$, which is non-empty due to continuity of $h$. Continuity of $h$ also guarantees that $h(\beta_0)=M$; hence $\beta_0 \in (0,1)$ since $h(0)=h(1)=0$. Get $\delta >0$ such that $(\beta_0-\delta, \beta_0+\delta) \subseteq (0,1)$ and define
	$$ \beta_1:=\beta_0-\delta(1-\lambda), \;\beta_2 := \beta_0 + \delta \lambda, \;  u:= \beta_1 x +(1-\beta_1)y, \;\; v := \beta_2x + (1-\beta_2)y.$$ 
	Note that $0 < \beta_1 < \beta_0 < \beta_2 <1$ with $\lambda \beta_1 + (1-\lambda) \beta_2 = \beta_0$ and $\lambda u + (1-\lambda) v = \beta_0x + (1-\beta_0) y.$ We can, therefore, write the following series of inequalities.
	\begin{align*}
		M > \lambda h(\beta_1) + (1- \lambda)h(\beta_2) &=  \lambda \left[ f(u) - \beta_1f(x) - (1-\beta_1) f(y) \right] \\
		&\hspace{ 1in } + (1-\lambda)  \left[ f(v) - \beta_2f(x) - (1-\beta_2) f(y) \right] \\
		& = \lambda f(u) + (1-\lambda) f(v) - \beta_0f(x) - (1-\beta_0)f(y) \\
		& \geq f(\lambda u +(1-\lambda) v)  - \beta_0f(x) - (1-\beta_0)f(y) \\
		& = f(\beta_0x + (1-\beta_0) y)  - \beta_0f(x) - (1-\beta_0)f(y) = h(\beta_0)=M,
	\end{align*}
	where the left-most inequality follows from the fact that $\beta_1 < \beta_0$ and hence $h(\beta_1) <M$. This gives us a contradiction.
\end{proof}

The following proposition now follows readily from Lemma~\ref{lem1} and Lemma~\ref{lem2}.

\begin{proposition}{\label{lambdacon:main}}
	Suppose that $f:(a,b) \to \mathbb{R}$ is $\lambda$-convex. Moreover for any $x \in (a,b)$, $f$ is bounded on an interval around $x$. Then $f$ is convex.
\end{proposition}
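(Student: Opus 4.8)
The plan is to combine the two preceding lemmas directly, since Proposition~\ref{lambdacon:main} is essentially their conjunction. The hypothesis gives two things about $f$: it is $\lambda$-convex on all of $(a,b)$, and it is bounded on some interval around each point $x \in (a,b)$. The first step is to upgrade the boundedness hypothesis into continuity. For each fixed $x \in (a,b)$, the function $f$ is $\lambda$-convex and bounded on an interval around $x$, so Lemma~\ref{lem1} (the ``if'' direction) applies at that point and yields continuity of $f$ at $x$. Since $x$ was arbitrary, $f$ is continuous at every point of $(a,b)$, i.e.\ $f$ is continuous on $(a,b)$.

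The second step is then immediate: $f$ is now known to be both $\lambda$-convex (by hypothesis) and continuous (from the first step), which are exactly the two conditions required by Lemma~\ref{lem2}. Invoking that lemma gives that $f$ is convex, completing the argument.

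In terms of where the difficulty lies, there is essentially no obstacle in the proof of the proposition itself; all of the genuine work has already been absorbed into the two lemmas. Lemma~\ref{lem1} carries the analytic content of turning a local boundedness condition into continuity via the $\lambda$-convexity inequality and a $\limsup/\liminf$ argument, and Lemma~\ref{lem2} supplies the contradiction argument that promotes continuous $\lambda$-convexity to full convexity. The only point worth stating explicitly is that Lemma~\ref{lem1} must be applied separately at each point $x \in (a,b)$ in order to conclude continuity on the whole interval, rather than at a single point; once that pointwise-to-global passage is noted, the two lemmas chain together with no further computation.

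\begin{proof}
	Fix any $x \in (a,b)$. By hypothesis $f$ is bounded on an interval around $x$, and $f$ is $\lambda$-convex on $(a,b)$. Hence Lemma~\ref{lem1} implies that $f$ is continuous at $x$. As $x \in (a,b)$ was arbitrary, $f$ is continuous on $(a,b)$. Now $f$ is $\lambda$-convex and continuous on $(a,b)$, so Lemma~\ref{lem2} yields that $f$ is convex.
\end{proof}
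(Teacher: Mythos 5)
Your proof is correct and is exactly the argument the paper intends: the paper states that the proposition "follows readily from Lemma~\ref{lem1} and Lemma~\ref{lem2}," and your two-step chain (pointwise boundedness plus $\lambda$-convexity gives continuity at each point via Lemma~\ref{lem1}, then continuity plus $\lambda$-convexity gives convexity via Lemma~\ref{lem2}) is precisely that deduction, spelled out.
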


\begin{lemma}{\label{lem3}}
	Let $f:(a,b) \to \mathbb{R}$ be non-decreasing. Suppose the left-hand limit function $f(\cdot -)$, defined as $f(x-) = \lim_{y \uparrow x} f(y)$ for all $x \in (a,b)$, is continuous. Then $f$ is also continuous; in particular $f(\cdot-)=f$. 
\end{lemma}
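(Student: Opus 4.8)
The plan is to reduce the claim to an equality of one-sided limits and then feed the continuity hypothesis into it. Since $f$ is non-decreasing, at every $x \in (a,b)$ both one-sided limits exist and admit the representations $f(x-) = \sup_{y<x} f(y)$ and $f(x+) = \inf_{y>x} f(y)$, with the sandwich $f(x-) \le f(x) \le f(x+)$. Hence $f$ is continuous at $x$ precisely when $f(x-) = f(x+)$, and the auxiliary claim $f = f(\cdot-)$ is then automatic. So the entire statement reduces to showing $f(x-) = f(x+)$ for every $x$.

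Writing $g := f(\cdot-)$, I note that $g$ is again non-decreasing, so its own one-sided limits exist. The heart of the argument is the identity $g(x+) = f(x+)$, that is, $\lim_{y \downarrow x} f(y-) = f(x+)$. One inequality is immediate: since $f(y-) \le f(y)$ for every $y$, taking the infimum over $y>x$ gives $g(x+) = \inf_{y>x} f(y-) \le \inf_{y>x} f(y) = f(x+)$. For the reverse inequality I would interleave points: for each $y>x$ pick any $w$ with $x<w<y$; then $f(y-) \ge f(w) \ge f(x+)$, where the first bound uses $w<y$ and the second uses $w>x$. Letting $y \downarrow x$ yields $g(x+) \ge f(x+)$, and the identity follows.

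With $g(x+)=f(x+)$ established, the conclusion drops out: continuity of $g$ gives in particular right-continuity, $g(x+)=g(x)$, while by definition $g(x)=f(x-)$. Therefore $f(x+) = g(x+) = g(x) = f(x-)$ at every $x$, and the sandwich $f(x-)\le f(x)\le f(x+)$ forces $f(x-)=f(x)=f(x+)$. Thus $f$ is continuous and coincides with $f(\cdot-)$.

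I expect the only delicate point to be the identity $g(x+)=f(x+)$, which amounts to interchanging a left-limit with a right-limit for a monotone function; the interleaving step above (or an equivalent $\varepsilon$-argument through the $\sup$/$\inf$ characterizations) is what makes this rigorous and non-circular. The remaining manipulations are routine bookkeeping with monotonicity, and no appeal to the earlier $\lambda$-convexity machinery is needed.
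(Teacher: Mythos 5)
Your proof is correct and uses essentially the same mechanism as the paper's: bound $f(y-)$ from below for $y>x$ by interleaving a point, then let $y\downarrow x$ and invoke the (right-)continuity of $f(\cdot-)$ at $x$. The paper is marginally more direct --- it notes $f(y-)\ge f(x)$ for $y>x$ and concludes $f(x-)\ge f(x)$ immediately, without routing through $f(x+)$ --- but the two arguments are the same in substance.
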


\begin{proof}
	It is enough to show that $f(\cdot-)=f$. Take any $x \in (a,b)$. Monotonicity of $f$ implies that $f(x-) \leq f(x)$. On the other hand, $f(y) \geq f(x)$ for all $ a < x <y < b$; and hence $f(y-) \geq f(x).$  Since $f(\cdot-)$ is continuous, we can take $y \downarrow x$ to conclude that $f(x-) \geq f(x).$ This shows $f(x-)=f(x).$
\end{proof}

\bibliographystyle{plain}
\bibliography{fdpd_newrevised_arxiv}

\end{document}